\begin{document}

\title[approximate solutions of Sincov's equation]{Richard's inequality, Cauchy-Schwarz's inequality and approximate solutions of Sincov's equation}

\author{W\l{}odzimierz~Fechner}
\address{Institute of Mathematics, \L\'od\'z University of Technology, ul. W\'olcza\'nska 215, 90-924 \L\'od\'z, Poland}
\email{wlodzimierz.fechner@p.lodz.pl}

\newtheorem{thm}[]{Theorem}
\newtheorem{cor}[]{Corollary}
\newtheorem{lem}[]{Lemma}
\newtheorem{prop}[]{Proposition}
\theoremstyle{remark}
\newtheorem{rem}[]{Remark}
\newtheorem{ex}[]{Example}
\newcommand{\N}{\mathbb{N}}
\newcommand{\R}{\mathbb{R}}
\newcommand{\C}{\mathbb{C}}
\newcommand{\K}{\mathbb{K}}
\renewcommand{\(}{\left(} \renewcommand{\)}{\right)}
\renewcommand{\[}{\left[} \renewcommand{\]}{\right]}

\keywords{Richard inequality; Cauchy-Schwarz inequality; Sincov equation}
\subjclass[2010]{26D15, 39B62, 39B82, 46C05}

\begin{abstract}
We observe a connection between Cauchy-Schwarz' and Richard's inequalities in inner product spaces and a Ulam-type stability problem for multiplicative Sincov's functional equation. We prove that this equation is super-stable for unbounded mappings, i.e. every unbounded approximate solution is an exact solution. 
\end{abstract}

\maketitle

\section{Inequalities}

Let $H$ be an inner product space over the field $\K \in \{ \R, \C \}$. In the present note we are concerned with Richard's inequality (see \cite{R}):
\begin{equation}\label{R}
\left| {\left\langle a|x\right\rangle\cdot \left\langle x|b\right\rangle}-  \frac{\left\langle a|b\right\rangle\cdot \|x\|^2}{2}\right|\leq \frac{\|a\|\cdot \|b\|\cdot \|x\|^2}{2}
\end{equation}
for all $a,b, x \in H$.
Earlier, Ch. Blatter \cite[Lemma 2]{Bl} obtained a one-sided estimate of \eqref{R} in a real inner product space, but his result remained unknown for several years.
Inequality \eqref{R} improves Buzano inequality (see \cite{B}):
\begin{equation}\label{B}
\big|\left\langle a|x\right\rangle\cdot \left\langle x|b\right\rangle\big|\leq \frac12\big[\|a\|\cdot \|b\| + |\left\langle a|b\right\rangle|\big]\cdot \|x\|^2.
\end{equation}
for all $a,b, x \in H$.
Observe that for $a=b$ inequality \eqref{B} reduces to Cauchy-Schwarz's inequality. Recently, a number of papers regarding inequalities \eqref{R} and \eqref{B} appeared. We refer the reader to a recent article by S.S. Dragomir \cite{D} and references therein.

\medskip

Let us define a map $F\colon (H \setminus \{0\} ) \times (H \setminus \{0\}) \to \K$ by the formula
$$F(u,v):=\frac{2\left\langle u|v\right\rangle}{\|u\|\cdot \|v\|}, \quad u, v \in H\setminus \{0\}.$$
With the aid of this notation inequality \eqref{R} can be rewritten as
\begin{equation}
|F(a,x)\cdot F(x,b) - F(a,b)|\leq 2, \quad a, b, x \in H\setminus \{0\},
\label{1}
\end{equation}
whereas Cauchy-Schwarz's inequality is equivalent to
\begin{equation}
 |F(u,v)|\leq 2,  \quad u, v \in H\setminus \{0\}.
\label{2}
\end{equation}
The main purpose of the paper is to study a connection between \eqref{1} and \eqref{2} in case $F$ is an arbitrary mapping defined on a non-empty set. Moreover, we will replace the number $2$ appearing on the right-hand sides of both estimates by an arbitrary non-negative constant.

\section{Stability of Sincov's functional equation}

\subsection{The problem}
Let $X$ be a non-empty set, $F\colon X \times X \to \C$ be an arbitrary mapping and assume that $c \geq 0$ is a fixed constant. 

Multiplicative Sincov's functional equation is of the form
\begin{equation}\label{S}
F(a,x)\cdot F(x,b) = F(a,b), \quad a, b, x \in X.
\end{equation}
Equation \eqref{S} has an old history and is of significant importance. For details we refer the reader to papers of D. Gronau \cite{G, G2}.

In this section we will study approximate solutions of \eqref{S}. We are concerned with the following Ulam-type stability problem:
\begin{equation}
|F(a,x)\cdot F(x,b) - F(a,b)|\leq c, \quad a, b, x \in X.
\label{main}
\end{equation}
Observe that \eqref{main} is a direct abstract version of \eqref{1} and we assume that mapping $F\colon X \times X \to \C$ is arbitrary. 

\medskip

Dealing with a Ulam-type stability problem one  wants to establish a relation between approximate solutions of a given equation and exact solutions. Several types of stability behaviour are known (main types are: stability, super-stability, hyper-stability).
For more informations on the stability of functional equations the reader is referred to a survey article of Z. Moszner \cite{M}. On page 64 of \cite{M} some variants of Sincov's equation together with their stabilities appear. 

\subsection{Bounded solutions}
In this subsection we exploit the case of bounded mappings satisfying \eqref{main}. First, we give some examples. Next, some conditions related to the boundedness of solution of \eqref{main} are provided.

\medskip

Our first easy remark shows that every bounded mapping is a solution of \eqref{main}. 

\begin{rem}\label{r1}
Assume that $X$ is a non-empty set, $F\colon X\times X \to \C$ is a bounded mapping and define $c_0:= \sup \{|F(x, y)| : x, y \in X\}.$ Then $F$ solves \eqref{main} with $c=c_0^2+c_0$. In general the value  $c=c_0^2+c_0$ cannot be replaced by a smaller constant. To see this take arbitrary $c_0>0$ and define $F\colon X\times X \to \C$ as a constant mapping equal to $-c_0$. Then, the left-hand side of \eqref{main} is equal to $c_0^2+c_0$.
\end{rem}

Next, we provide an example which shows that for a given $c>0$ there exist bounded but arbitrary large solutions of \eqref{main} which do not satisfy equation \eqref{S}. 

\begin{ex}\label{e1}
Let $c>0$ and $n \in \N$ be fixed constants. Take $X= \{ n, n+1, \ldots , n^2\}$ and define $F\colon X\times X \to \C$ by
$$F(a,b) := \frac{a}{b+c}, \quad a, b \in X.$$
Then $$\sup \{ F(a,b) : a, b \in X\} = \frac{n^2}{n+c}$$
and
$$|F(a,x)\cdot F(x,b) - F(a,b)| = \frac{ac}{(b+c)(x+c)} \leq \frac{n^2}{(n+c)^2}c <   c, \quad a, b, x \in X.$$
\end{ex}

Another example showing a similar effect in a different settings is due to Z. Moszner \cite[p. 64]{M}. His claim was that multiplicative Sincov's equation is not stable in a sense of Ulam in the class of mappings which attain positive values only. In order to justify this he observed that if $n>2$ is a positive integer such that $(1-1/n)/n\leq c$, then a constant function $F_c$ which is equal to $1/n$ satisfies \eqref{main}. Further, it is easy to observe that every solution of \eqref{S} with non-zero values is equal to $1$ on the diagonal and, as a consequence,
can't be uniformly close to $F_c$. Note however, that if we do not restrict ourselves to the class of positive mappings, then the function constantly equal to zero is a solution of \eqref{S} which uniformly approximates $F_c$. 

\medskip

Next, we will establish some basic properties of solutions of \eqref{main} related to boundedness.

\begin{prop}\label{p1}
Assume that $c\geq 0$ is fixed, $F\colon  X\times X  \to \C$ solves \eqref{main} and $a, x \in X$ are arbitrary. Then: 
\begin{itemize}
	\item[(i)] $|F(a,a) - F(x,x)|\leq 2c$,
	\item[(ii)] the value $|F(x,x)|$ is bounded by a constant depending upon $F$,
	\item[(iii)] the product $|F(a,x)\cdot F(x,a)|$ is bounded by a constant depending upon $F$.
\end{itemize} 
\end{prop}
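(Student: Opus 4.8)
The plan is to obtain all three assertions from one symmetric pair of substitutions into \eqref{main}, followed only by the triangle inequality and the fixing of a single reference point. No genuine boundedness analysis is needed; everything reduces to the algebraic structure of \eqref{main}.

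First I would feed \eqref{main} the values that put $a$ in the outer two slots and $x$ in the middle, and then the values obtained by interchanging $a$ and $x$, giving
$$|F(a,x)\cdot F(x,a) - F(a,a)|\leq c$$
and
$$|F(x,a)\cdot F(a,x) - F(x,x)|\leq c.$$
The key observation is that the product appearing in both lines is \emph{one and the same} complex number, since multiplication in $\C$ is commutative, so $F(a,x)\cdot F(x,a)=F(x,a)\cdot F(a,x)$. Thus both diagonal values $F(a,a)$ and $F(x,x)$ lie within distance $c$ of this common pivot, and the triangle inequality immediately yields $|F(a,a)-F(x,x)|\leq 2c$, which is (i).

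For (ii) I would fix an arbitrary $x_0\in X$ (available because $X$ is non-empty) and specialise (i) to this point: every diagonal value then satisfies $|F(x,x)|\leq |F(x_0,x_0)|+2c$, a bound depending only on $F$ and $c$. For (iii) I would go back to the first displayed estimate, which rearranges to $|F(a,x)\cdot F(x,a)|\leq |F(a,a)|+c$, and insert the bound just obtained in (ii) to conclude $|F(a,x)\cdot F(x,a)|\leq |F(x_0,x_0)|+3c$.

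I do not expect a real obstacle: the whole argument hinges on choosing the substitution that makes $F(a,x)\cdot F(x,a)$ act as a pivot linking the two diagonal terms. The one point that deserves care is that assertions (ii) and (iii) ask only for constants \emph{depending on $F$}, so the estimates must be anchored to a fixed value such as $F(x_0,x_0)$; there is no reason to expect, and the statement does not claim, a bound independent of $F$.
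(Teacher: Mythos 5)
Your proposal is correct and follows essentially the same route as the paper: both use the two substitutions $b=a$ and the $a\leftrightarrow x$ swap in \eqref{main}, exploit commutativity so that $F(a,x)\cdot F(x,a)$ serves as a common pivot for the triangle inequality in (i), and then anchor (ii) and (iii) to a fixed diagonal value. The only cosmetic difference is that the paper derives (ii) by chaining the two displayed estimates directly rather than citing (i), which is the same computation.
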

\begin{proof}
Directly from \eqref{main} we obtain
\begin{align}
|  F(a, x) F(x,a) - F(a,a)| & \leq c, \label{ax} \\
|  F(x, a) F(a,x) - F(x,x)| & \leq c,  \label{xa}
\end{align}
which proves (i). 

To justify (ii) it is enough to observe that by \eqref{ax} and \eqref{xa} we have
$$|F(x,x)|\leq |F(x,a) \cdot F(a,x) |+c \leq |F(a,a)| + 2c.$$
Now, (iii) follows from the above computation and from (ii).
\end{proof}

\begin{prop}\label{p2}
Assume that $c\geq 0$ is fixed and $F\colon  X\times X  \to \C$ solves \eqref{main}. If for a certain $x_0 \in X$ we have $F(x_0,x_0)\neq 1$, then both
functions $F(x_0, \cdot)$ and $F( \cdot, x_0 )$ are bounded.
\end{prop}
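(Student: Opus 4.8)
The plan is to exploit the freedom of substitution in \eqref{main} by forcing the ``middle'' variable $x$ to coincide with $x_0$, so that the diagonal value $F(x_0,x_0)$ appears as a factor and can be collected. First I would set $a=x=x_0$ in \eqref{main}. The left-hand side then becomes $|F(x_0,x_0)\cdot F(x_0,b) - F(x_0,b)| = |F(x_0,b)|\cdot|F(x_0,x_0)-1|$, so that
\begin{equation*}
|F(x_0,b)|\cdot|F(x_0,x_0)-1|\leq c, \quad b \in X.
\end{equation*}
Since $F(x_0,x_0)\neq 1$ by hypothesis, the number $|F(x_0,x_0)-1|$ is strictly positive, and dividing by it yields $|F(x_0,b)|\leq c/|F(x_0,x_0)-1|$ for every $b\in X$. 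This is exactly the boundedness of $F(x_0,\cdot)$.

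Symmetrically, I would put $x=b=x_0$ in \eqref{main}. The same factoring gives $|F(a,x_0)|\cdot|F(x_0,x_0)-1|\leq c$ for all $a \in X$, and dividing again produces $|F(a,x_0)|\leq c/|F(x_0,x_0)-1|$, which proves that $F(\cdot,x_0)$ is bounded as well. Both bounds share the explicit constant $c/|F(x_0,x_0)-1|$, depending only on $c$ and the single value $F(x_0,x_0)$.

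There is no genuine obstacle here: the whole argument rests on the single observation that collapsing two of the three arguments to $x_0$ linearizes \eqref{main} in the remaining variable, with coefficient $F(x_0,x_0)-1$. The only point requiring care is that the division is legitimate precisely because of the assumption $F(x_0,x_0)\neq 1$; this also clarifies why the diagonal value $1$ is the critical threshold, since it is the value forced on the diagonal by any nonzero exact solution of \eqref{S}. I would finally remark that the conclusion even survives the degenerate case $c=0$, where the estimates simply force $F(x_0,\cdot)\equiv 0$ and $F(\cdot,x_0)\equiv 0$.
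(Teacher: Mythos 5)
Your proof is correct and follows essentially the same route as the paper: the key substitution $a=x=x_0$ (and symmetrically $x=b=x_0$) in \eqref{main}, yielding $|F(x_0,b)|\cdot|F(x_0,x_0)-1|\leq c$, is exactly the paper's argument. The only difference is presentational---the paper argues by contraposition (unboundedness of $F(x_0,\cdot)$ forces $F(x_0,x_0)=1$), whereas you divide directly by $|F(x_0,x_0)-1|$ and obtain the explicit bound $c/|F(x_0,x_0)-1|$.
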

\begin{proof}
Suppose that we are given $x_0 \in X$ such that function $F(x_0, \cdot)$ is unbounded. Then apply \eqref{main} for $a=x=x_0$ to get
$$|F(x_0,b)|\cdot | F(x_0,x_0)-1|\leq c, \quad b \in X.$$
Clearly, since the value $|F(x_0,b)|$ can be as large as we want, we get $F(x_0,x_0)=1$. 

Proof in case $F( \cdot, x_0)$ is unbounded is similar.
\end{proof}

\subsection{Unbounded solutions}
In this subsection we prove that multiplicative Sincov's functional equation is super-stable. This means that unbounded mappings which satisfy inequality \eqref{main} solve equation \eqref{S} and, as a consequence are of some prescribed form.

\begin{lem}\label{l1}
Assume that $c\geq 0$ is fixed, $F\colon X\times X \to \C$ is a solution of \eqref{main} and there exists a $y_0\in X$ such that the map $F(\cdot, y_0)$ is unbounded. Then $F$ never vanishes and for every $y\in X$ the map $F(\cdot, y)$ is unbounded.
\end{lem}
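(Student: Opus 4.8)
The plan is to squeeze all the needed information out of \eqref{main} by fixing the last argument at $y_0$ and letting the first argument run along a sequence witnessing the unboundedness of $F(\cdot,y_0)$. So I would begin by choosing points $a_n\in X$ with $|F(a_n,y_0)|\to\infty$, and then isolate a single key substitution from which both assertions follow.

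The crucial step is to fix an arbitrary $y\in X$ and apply \eqref{main} with $a=a_n$, $x=y$ and $b=y_0$. By the reverse triangle inequality this gives
$$|F(a_n,y)|\cdot|F(y,y_0)| = |F(a_n,y)\,F(y,y_0)| \geq |F(a_n,y_0)| - c,$$
and the right-hand side tends to $+\infty$. From this one inequality both conclusions drop out. First, if $F(y,y_0)=0$ the left-hand side would vanish for every $n$, contradicting that the right-hand side is eventually positive; hence $F(y,y_0)\neq 0$, and since $y$ was arbitrary, $F(\cdot,y_0)$ never vanishes. Second, dividing by the now nonzero quantity $|F(y,y_0)|$ yields $|F(a_n,y)| \geq \bigl(|F(a_n,y_0)|-c\bigr)/|F(y,y_0)| \to \infty$, so $F(\cdot,y)$ is unbounded; again $y$ was arbitrary, which settles the second claim of the lemma.

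It then remains to upgrade non-vanishing of $F(\cdot,y_0)$ to non-vanishing of $F$ on all of $X\times X$, and here I would simply observe that the displayed argument used nothing about $y_0$ except that $F(\cdot,y_0)$ is unbounded. Having just proved that $F(\cdot,q)$ is unbounded for \emph{every} $q\in X$, I re-run the key step with $y_0$ replaced by an arbitrary $q$: choosing $a_n$ with $|F(a_n,q)|\to\infty$ forces $F(y,q)\neq 0$ for every $y$, so $F$ never vanishes. I do not expect a genuine obstacle: the entire difficulty is compressed into the choice of the substitution $x=y,\ b=y_0$ in \eqref{main}, after which everything is the reverse triangle inequality together with the observation that a product with a fixed finite factor cannot grow without bound unless the other factor does. (One could instead open by invoking Proposition~\ref{p2} to record $F(y_0,y_0)=1$, but this turns out to be unnecessary for the present argument.)
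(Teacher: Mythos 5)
Your proposal is correct and follows the paper's proof essentially verbatim: the same substitution $a=a_n$, $x=y$, $b=y_0$ in \eqref{main}, the same reverse-triangle-inequality deduction that $|F(a_n,y)\cdot F(y,y_0)|\to\infty$ forces both $F(y,y_0)\neq 0$ and unboundedness of $F(\cdot,y)$, and the same bootstrap of rerunning the argument with $y_0$ replaced by an arbitrary point to get that $F$ never vanishes. Your closing remark is also accurate: the paper likewise does not invoke Proposition \ref{p2} here.
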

\begin{proof}
Fix a sequence $(a_n)_{n \in \N}\subset X$ such that $|F(a_n,y_0)|\to \infty$ and an arbitrary point $y \in X$. From \eqref{main} we have 
$$|F(a_n,y) \cdot F(y, y_0) - F(a_n,y_0)|\leq c.$$ 
Thus $|F(a_n,y) \cdot F(y, y_0)|\to \infty$, which implies that $|F(a_n,y)|\to \infty$ and $F(y, y_0)\neq 0$. To get that $F$ never vanishes it is enough to repeat the foregoing reasoning and apply the already proven part with $y_0$ and $y$ replaced by two arbitrary points.
\end{proof}

The next lemma can be proven analogously.

\begin{lem}\label{l2}
Assume that $c\geq 0$ is fixed, $F\colon X\times X \to \C$ is a solution of \eqref{main} and there exists a $x_0\in X$ such that the map $F(x_0, \cdot)$ is unbounded. Then $F$ never vanishes and for every $x\in X$ the map $F(x, \cdot)$ is unbounded.
\end{lem}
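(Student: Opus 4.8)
The plan is to mirror the proof of Lemma \ref{l1}, interchanging the roles of the two slots of $F$. Fix a sequence $(b_n)_{n\in\N}\subset X$ with $|F(x_0,b_n)|\to\infty$ and an arbitrary point $x\in X$. The correct substitution into \eqref{main} is $a=x_0$, middle variable $x$, and $b=b_n$, which yields
$$|F(x_0,x)\cdot F(x,b_n) - F(x_0,b_n)|\leq c, \quad n\in\N.$$
Since the subtracted term $|F(x_0,b_n)|$ tends to infinity while the whole difference stays bounded by $c$, the product $|F(x_0,x)\cdot F(x,b_n)|$ must also tend to infinity. This forces two conclusions at once: the factor $F(x_0,x)$ cannot be zero (otherwise the product would vanish identically in $n$), and $|F(x,b_n)|\to\infty$, so $F(x,\cdot)$ is unbounded. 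As $x$ was arbitrary, every map $F(x,\cdot)$ is unbounded and, in addition, $F(x_0,\cdot)$ never vanishes.

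To upgrade the non-vanishing of the single map $F(x_0,\cdot)$ to non-vanishing of $F$ everywhere, I would re-run the same computation using the unboundedness just established for an arbitrary first argument. Given any $x_1\in X$, pick a sequence $(d_n)_{n\in\N}\subset X$ with $|F(x_1,d_n)|\to\infty$; for an arbitrary $x_2\in X$ the substitution $a=x_1$, middle variable $x_2$, $b=d_n$ gives
$$|F(x_1,x_2)\cdot F(x_2,d_n)-F(x_1,d_n)|\leq c, \quad n\in\N,$$
and the same blow-up argument yields $F(x_1,x_2)\neq 0$. Since $x_1,x_2$ range over all of $X$, the map $F$ never vanishes.

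I do not expect a genuine obstacle here beyond bookkeeping. The only point requiring care is the choice of substitution, namely placing the fixed unbounded argument $x_0$ in the first slot and the varying index $b_n$ in the second, so that the bounded difference splits as $F(x_0,x)\cdot F(x,b_n)$ with the desired map $F(x,\cdot)$ appearing as a single factor. Everything else is the verbatim symmetric analogue of Lemma \ref{l1}.
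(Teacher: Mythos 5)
Your proof is correct and is exactly the argument the paper intends: it states that Lemma \ref{l2} "can be proven analogously" to Lemma \ref{l1}, and your substitution $a=x_0$, middle variable $x$, $b=b_n$ is precisely the symmetric counterpart of the paper's proof of that lemma, including the bootstrap step that upgrades non-vanishing from $F(x_0,\cdot)$ to all of $F$.
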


Our next example shows that assumptions of both previous lemmas are independent.

\begin{ex}\label{e0}
Let $X=[1, + \infty)$ and define $F\colon X \times X \to \C$ as $F(x,y) = \frac{x}{y}$. Then $F$ solves \eqref{main} with every $c\geq 0$, for every $x_0 \in X$ the map $F(x_0, \cdot)$ is bounded and for every $y_0 \in X$ the map $F(\cdot, y_0)$ is unbounded.
\end{ex}

Now, we are ready to state and prove our main theorem.

\begin{thm}\label{t1}
Assume that $c\geq 0$ is fixed, $F\colon X\times X \to \C$ is a solution of \eqref{main} and there exist $x_0, y_0 \in X$ such that the maps $F(x_0, \cdot), F(\cdot, y_0)$ are unbounded. Then, there exists a mapping $f\colon X \to \C\setminus \{0\}$ such that
\begin{equation}\label{eq}
 F(u, v) = \frac{f(u)}{f(v)}, \quad u, v \in X.
\end{equation}
\end{thm}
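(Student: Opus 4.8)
The plan is to proceed in two stages: first upgrade the approximate identity \eqref{main} to the exact equation \eqref{S}, and then read off the representation \eqref{eq} from the exact equation together with the non-vanishing of $F$. Recall that Lemmas \ref{l1} and \ref{l2} already guarantee, under our hypotheses, that $F$ never vanishes and that every column $F(\cdot, y)$ and every row $F(x, \cdot)$ is unbounded; these two facts are all that the argument will use.

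For the first and main stage, fix $a, b, x \in X$ and set $E := F(a,x) F(x,b) - F(a,b)$, so that $|E| \le c$ by \eqref{main}. The idea is a super-stability trick: amplify $E$ by multiplying by a value of $F$ that can be made arbitrarily large, and show that the product nevertheless stays bounded. Concretely, I would multiply $E$ by $F(t,a)$ for an auxiliary $t \in X$ and rewrite $F(t,a)E$ using three instances of \eqref{main}: the estimate of $F(t,a)F(a,x)$ against $F(t,x)$, of $F(t,x)F(x,b)$ against $F(t,b)$, and of $F(t,a)F(a,b)$ against $F(t,b)$. Telescoping these, the two copies of $F(t,b)$ cancel and one is left with $F(t,a)E = \beta + \alpha F(x,b) - \gamma$, where $\alpha, \beta, \gamma$ are the three defects, each of modulus at most $c$. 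Hence
\begin{equation*}
|F(t,a)| \cdot |E| \le c\,(2 + |F(x,b)|),
\end{equation*}
the right-hand side being independent of $t$. Since the column $F(\cdot, a)$ is unbounded by Lemma \ref{l1}, choosing $t = t_n$ with $|F(t_n,a)| \to \infty$ forces $|E| = 0$. As $a, b, x$ were arbitrary, $F$ satisfies \eqref{S} exactly.

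For the second stage, putting $a = b = x$ in \eqref{S} gives $F(a,a)^2 = F(a,a)$, so the non-vanishing of $F$ yields $F(a,a) = 1$. Now fix any point $p \in X$ (possible since $X \neq \emptyset$) and define $f(u) := F(u,p)$, which is a map $X \to \C \setminus \{0\}$ because $F$ never vanishes. Applying \eqref{S} with $b = p$ gives $F(u,v)\,F(v,p) = F(u,p)$, that is $F(u,v) = F(u,p)/F(v,p) = f(u)/f(v)$, which is precisely \eqref{eq}.

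The only delicate point is the first stage: everything hinges on choosing the right auxiliary multiplication and grouping so that, after telescoping, the amplifying factor $F(t,a)$ multiplies $|E|$ while the remaining terms stay bounded uniformly in $t$. Once this algebraic identity is in place, the unboundedness supplied by Lemma \ref{l1} does the rest, and the passage from the exact Sincov equation to the quotient form \eqref{eq} is routine.
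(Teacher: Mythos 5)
Your proof is correct, and it takes a genuinely different route from the paper's. Your stage one is the classical Baker-style amplification: with $E = F(a,x)F(x,b)-F(a,b)$ and the three defects $\alpha = F(t,a)F(a,x)-F(t,x)$, $\beta = F(t,x)F(x,b)-F(t,b)$, $\gamma = F(t,a)F(a,b)-F(t,b)$, the identity $F(t,a)E = \alpha F(x,b)+\beta-\gamma$ does hold, giving $|F(t,a)|\,|E|\le c\,(2+|F(x,b)|)$ uniformly in $t$; unboundedness of $F(\cdot,a)$ (Lemma \ref{l1}) then forces $E=0$, and \eqref{eq} follows by fixing a base point $p$ and setting $f=F(\cdot,p)$. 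The paper never establishes \eqref{S} first: it puts $f=F(\cdot,x_0)$, $g=F(x_0,\cdot)$, derives the approximate factorization $|F(a,b)-f(a)g(b)|\le c$, proves the diagonal identity $f(x)g(x)=1$ via an estimate whose right-hand side becomes small when both $|f(a)|$ and $|g(b)|$ are large, and finally upgrades the resulting bound $|F(u,v)-f(u)/f(v)|\le c$ to equality by a limit argument along a sequence with $|F(a_n,u)|\to\infty$; there, the exact equation \eqref{S} is a corollary of the representation \eqref{eq}, not a step toward it. Your route buys two things: it is shorter, and, although you quote both Lemmas \ref{l1} and \ref{l2} at the outset, your argument actually uses only Lemma \ref{l1} (non-vanishing plus unboundedness of every column $F(\cdot,a)$), so it proves a sharper theorem in which the hypothesis that $F(x_0,\cdot)$ be unbounded is redundant --- a genuine strengthening, since maps such as the one in Example \ref{e0} have every row bounded and are thus out of reach of the theorem as stated. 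The paper's route, by contrast, truly needs both unboundedness hypotheses (to make both $1/|f(a)|$ and $1/|g(b)|$ small in the diagonal step), though it produces the intermediate stability estimate \eqref{est1} along the way. A cosmetic remark: your derivation of $F(a,a)=1$ is not needed, since $F(u,v)F(v,p)=F(u,p)$ together with $F(v,p)\neq 0$ already yields \eqref{eq}.
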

\begin{proof}
Define $f, g \colon X \to \C$ as $f:= F(\cdot , x_0)$ and $g:= F(x_0, \cdot)$.
In view of Lemma \ref{l1} and Lemma \ref{l2} both mappings are unbounded and never vanish.
Apply \eqref{main} with $x=x_0$ to get
\begin{equation}\label{est0}
\left| F(a, b) - f(a)g(b)\right|\leq c, \quad a, b \in X.
\end{equation}

Our first claim is that $f(x)g(x)=1$ for every $x \in X$. In order to show this, fix arbitrary $a, b , x \in X$.
Apply \eqref{est0} to obtain
$$
| F(a,x)f(x)g(b) - f(a)g(x)f(x)g(b)|\leq c |f(x)|\cdot |g(b)|
$$
and
$$
|F(a,x)F(x,b) - F(a,x)f(x)g(b)|\leq c|F(a,x)|\leq c^2+c|f(a)|\cdot |g(x)|.
$$
Adding these inequalities together we get
\begin{equation}\label{i}
|F(a,x)F(x,b) - f(a)g(x)f(x)g(b)|\leq  c^2+c( |f(x)|\cdot |g(b)| + |f(a)|\cdot |g(x)|).
\end{equation}
Using the triangle inequality and then \eqref{i}, \eqref{main} and \eqref{est0} we derive
\begin{align*}
|f(a)g(x)f(x)g(b) - f(a)g(b)| &\leq  |f(a)g(x)f(x)g(b) - F(a,x)F(x,b) | \\&+ |F(a,x)F(x,b) -F(a,b)| + |F(a,b) - f(a)g(b)|\\&\leq  c^2+ 2c + c( |f(x)|\cdot |g(b)| + |f(a)|\cdot |g(x)|).
\end{align*}
From this we have
\begin{equation}\label{dupa}
|g(x)f(x) - 1|\leq \frac{c^2+ 2c}{ |f(a)|\cdot |g(b)|} + c\frac{|f(x)|}{|f(a)|} +  c\frac{|g(x)|}{|g(b)|}.
\end{equation}
Since both mappings $f$ and $g$ are unbounded, then keeping $x$ fixed one can choose $a, b \in X$ such that the right-hand side of \eqref{dupa} is arbitrary small. This proves the desired equality $f(x)g(x)=1$. This, together with \eqref{est0} gives us the estimate.
\begin{equation}\label{est1}
\left| F(u, v) - \frac{f(u)}{f(v)}\right|\leq c, \quad u, v \in X.
\end{equation}

To finish the proof fix $u,v \in X$ arbitrarily. By Lemma \ref{l1} there exists a sequence $(a_n)_{n \in \N}\subset X$  such that $|F(a_n,u)|\to \infty$. From \eqref{main} we get 
$$\left|F(u,v) - \frac{F(a_n,v)}{F(a_n,u)} \right|\leq  \frac{c}{|F(a_n,u)|} .$$
From this we see that
$$\lim_{n \to \infty} \frac{F(a_n,v)}{F(a_n,u)} = F(u,v).$$
On the other hand, since $|F(a_n,u)|\to \infty$ and $|F(a_n,v)|\to \infty$, then by \eqref{est1} we  have
$$\lim_{n \to \infty} \frac{F(a_n,v) - \frac{f(a_n)}{f(v)}}{F(a_n,u)}  =   \lim_{n \to \infty} \frac{F(a_n,u) - \frac{f(a_n)}{f(u)}}{F(a_n,v)}  = 0.$$
Consequently, we obtain
$$F(u,v)  =   \lim_{n \to \infty} \frac{F(a_n,v)}{F(a_n,u)}=   \lim_{n \to \infty}  \frac{ \frac{f(a_n)}{f(v)} }{  \frac{f(a_n)}{f(u)} }=\frac{f(u)}{f(v)} .$$
\end{proof}

\begin{rem}
It is easy to see that function $f$ appearing in formula \eqref{eq} is determined uniquely up to a multiplicative constant. 
\end{rem}

\section{Conclusions and final remarks}

\begin{rem} Our Theorem \ref{t1} is in line with well-known results on the superstability of the exponential Cauchy equation and related equations of J.A. Baker, J. Lawrence, F. Zorzitto \cite{B1}, J.A. Baker \cite{B2}, R. Ger, P. \v{S}emrl \cite{GS}, T. Kochanek, M. Lewicki \cite{KL}, among others. However, taking into account the case of bounded solutions of \eqref{main} no full analogy can be observed, see Example \ref{e1}. Therefore, we can conclude that the multiplicative Sincov's equation is super-stable, however, there is no control over bounded solutions. 
\end{rem}

The following example is motivated by an example due to J.A. Baker from \cite{B2} and shows that in higher dimension the super-stability effect for Sincov's equation does not hold.

\begin{ex}
Denote by $M_{2\times 2}(\R)$ the algebra of all real $2 \times 2$ matrices equipped with the usual matrix norm. Take $X=(0, \infty)$ and fix a constant $c>0$. Choose $c_0>0$ such that $|c_0^2-c_0|=c$ and let $F\colon X \times X \to M_{2\times 2}(\R)$ be defined as
$$F(u,v) = 
\left[\begin{array}{cc}
	\frac{u}{v} & 0 \\ 0 & c_0
\end{array}\right], \quad u, v \in X.$$
It is easy to check that 
$$ \left\| F(a,x)\cdot F(x,b) - F(a,b) \right\| =  |c_0^2-c_0|=c \quad a, b, x \in X.$$ Therefore $F$ does not solve equation \eqref{S}, is unbounded and it satisfies the stability inequality \eqref{main}.
\end{ex}

\section{Acknowledgements}
The work performed in this study  was supported by the National Science Centre, Poland (under Grant No. 2015/19/B/ST6/03259).

\end{document}